\newtheorem{theorem}{Theorem}
\newtheorem{lemma}{Lemma}
\newenvironment{proof}{\noindent{\scshape Proof.}}{\hspace*{2mm} $\square$}
\newcommand{\G}{\mathscr{G}}
\newcommand{\V}{\mathscr{V}}
\newcommand{\E}{\mathscr{E}}
\newcommand{\C}{\mathscr{C}}
\newcommand{\ind}{\mathbf{1}}
\newcommand{\n}{\hspace*{-6pt}}
\DeclareMathOperator{\card}{card \,}
\DeclareMathOperator{\binomial}{Bin \,}
\DeclareMathOperator{\var}{Var}
\begin{document}

%%%%%%%%%%%%%%%%%%%%%%%%%%%%%%%%%%%%%%%%%%%%%%%%%%%%%%%%%%%%%%%%%%%%%%%%%%%%%%%%%%%%%%%%%%%%%%%%%%%%%%%%%%%%%%%%%%%%%%%%%%%%%%%%%%%%%%%%%%%%%%%%%%%%%%%%%%%%%%%%%%%%%%%%%%%%

\begin{frontmatter}
\title     {First and second moments of the size distribution of \\ bond percolation clusters on regular graphs}
\runtitle  {Bond percolation clusters on regular graphs}
\author    {Nicolas Lanchier\thanks{Nicolas Lanchier was partially supported by NSF grant CNS-2000792.} and Axel La Salle}
\runauthor {Nicolas Lanchier and Axel La Salle}
\address   {School of Mathematical and Statistical Sciences, Arizona State University, Tempe, AZ 85287, USA. \\ nicolas.lanchier@asu.edu. alasall1@asu.edu}

\maketitle

\begin{abstract} \ \
 Motivated by network resilience and insurance premiums in the context of cyber security, we derive universal upper bounds for the first and second moments of the size of bond percolation clusters on finite regular graphs.
 Thinking of the clusters as dynamical objects coupled with branching processes gives a first set of bounds that are accurate when the probability of an edge being open is small.
 Estimating the number of isolated vertices, we also obtain a second set of bounds that are accurate when the probability of an edge being closed is small.
 As an illustration, we apply our results to the first three Platonic solids.
\end{abstract}

\begin{keyword}
\kwd{Bond percolation, cluster size, branching process, network resilience, insurance premiums.}
\end{keyword}

\end{frontmatter}

%%%%%%%%%%%%%%%%%%%%%%%%%%%%%%%%%%%%%%%%%%%%%%%%%%%%%%%%%%%%%%%%%%%%%%%%%%%%%%%%%%%%%%%%%%%%%%%%%%%%%%%%%%%%%%%%%%%%%%%%%%%%%%%%%%%%%%%%%%%%%%%%%%%%%%%%%%%%%%%%%%%%%%%%%%%%

\section{Introduction}
\label{sec:intro}
 Bond percolation consists of a collection of independent Bernoulli random variables with the same success probability~$p$ indexed by the edges of a graph, with the edges associated to a success being open and the ones associated to a failure being closed.
 The percolation clusters are the connected components of the subgraph induced by the open edges.
 This process was introduced by~\cite{broadbent_hammersley_1957} to study the spread of a fluid through a medium. \\
\indent Bond percolation has been extensively studied on infinite graphs such as integer lattices~\cite{grimmett_1999} in which case the process typically exhibits a phase transition for the density~$p$ of open edges from a subcritical phase where all the percolation clusters are finite to a supercritical phase where at least one percolation cluster is infinite.
 Bond percolation has also been studied along increasing sequences of finite graphs: as the size of the graphs tends to infinity, the supercritical phase is now characterized by the existence of a giant connected component of open edges whose size scales like the size of the graph.
 Important examples are the complete graph, in which case the set of open edges consists of the~Erd\H{o}s-R\'{e}nyi random graph~\cite{erdos_renyi_1959}, and the hypercube~\cite{ajtai_komlos_szemeredi_1982}. \\
\indent Much less attention has been paid to bond percolation on fixed finite graphs in spite of its growing importance in terms of applications.
 Indeed, the first moment of the size of the percolation clusters on finite graphs is closely related to the notion of network resilience in computer network theory~\cite{kott_linkov_2019}.
 Similarly, the modeling framework introduced by~\cite{jevtic_lanchier_2020} shows that, in the context of cyber security, both the first and the second moments of the cluster size are keys to computing insurance premiums.
 Motivated by these aspects, \cite{jevtic_lanchier_lasalle_2020} studied the first and second moments of the cluster size on elementary graphs: the path, the ring and the star.
 In both contexts (network resilience and cyber insurance), the underlying graph represents a local area network, i.e., a finite group of computers~(the vertices) along with the way these computers are connected~(the edges).
 In this work, we study the first and second moments of the cluster size on general finite regular graphs that model local area networks more realistically than paths, rings or stars.

%%%%%%%%%%%%%%%%%%%%%%%%%%%%%%%%%%%%%%%%%%%%%%%%%%%%%%%%%%%%%%%%%%%%%%%%%%%%%%%%%%%%%%%%%%%%%%%%%%%%%%%%%%%%%%%%%%%%%%%%%%%%%%%%%%%%%%%%%%%%%%%%%%%%%%%%%%%%%%%%%%%%%%%%%%%%

\section{Model description}
\label{sec:model}
 Throughout this paper, $\G = (\V, \E)$ is a finite connected~$D$-regular graph with~$N$ vertices.
 Let~$x$ be a vertex chosen uniformly at random and assume that the edges are independently open with probability~$p$.
 The main objective is to study the first and second moments of
 $$ S = \card (\C_x) \quad \hbox{where} \quad \C_x = \{y \in \V : \hbox{there is a path of open edges connecting~$x$ and~$y$} \}, $$
 the random number of vertices in the percolation cluster starting at~$x$.

%%%%%%%%%%%%%%%%%%%%%%%%%%%%%%%%%%%%%%%%%%%%%%%%%%%%%%%%%%%%%%%%%%%%%%%%%%%%%%%%%%%%%%%%%%%%%%%%%%%%%%%%%%%%%%%%%%%%%%%%%%%%%%%%%%%%%%%%%%%%%%%%%%%%%%%%%%%%%%%%%%%%%%%%%%%%

\section{Coupling with a branching process}
\label{sec:branching}
 In this section, we prove the following upper bounds for the first and second moments.
\begin{theorem} -- Let~$\nu = (D - 1) p$ and~$R = N - 1$. Then,
\label{th:branching}
 $$ \begin{array}{rcl}
        E (S) & \n \leq \n & \displaystyle 1 + Dp \bigg(\frac{1 - \nu^R}{1 - \nu} \bigg) \vspace*{8pt} \\
      E (S^2) & \n \leq \n & \displaystyle \bigg(1 + Dp \bigg(\frac{1 - \nu^R}{1 - \nu} \bigg) \bigg)^2 + \frac{Dp (1 - p)}{(1 - \nu)^2} \bigg(\frac{(1 - \nu^R)(1 + \nu^{R + 1})}{1 - \nu} - 2R \nu^R \bigg). \end{array} $$
\end{theorem}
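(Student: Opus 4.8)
The plan is to build, on a common probability space, a coupling between the breadth-first exploration of the cluster $\C_x$ and a Galton--Watson tree, in such a way that the cluster size $S$ is stochastically dominated by the truncated total progeny $W$ of that tree; the two bounds then follow by computing $E(W)$ and $E(W^2)$ exactly. The branching process I would use has its root producing $\binomial(D, p)$ children and every other individual producing, independently, $\binomial(D - 1, p)$ children, truncated at generation $R = N - 1$, so that $W = \sum_{k=0}^{R} \tilde Z_k$ where $\tilde Z_k$ is the size of generation~$k$. The offspring laws reflect the regularity of $\G$: the starting vertex $x$ has $D$ incident edges, each open with probability $p$, while any vertex reached during the exploration is entered through one edge and therefore has at most $D - 1$ further edges to probe.

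First I would fix the breadth-first exploration of $\C_x$ and record $Z_k$, the number of cluster vertices at graph distance $k$ from $x$. Since $\G$ is connected with $N$ vertices, every vertex of $\C_x$ lies within distance $R = N - 1$ of $x$, so $S = \sum_{k=0}^{R} Z_k$ with no loss. The coupling is set up vertex by vertex: when a vertex is processed, the edges to its not-yet-examined neighbours are exactly the percolation coins, and I pad with fresh independent $\bernoulli(p)$ coins for the already-examined incident edges so that each non-root vertex is assigned precisely $\binomial(D - 1, p)$ offspring slots, and the root $\binomial(D, p)$. A new cluster vertex is created only when an open edge reaches a previously undiscovered vertex, whereas the branching process realizes every open slot; collisions with already discovered vertices can only destroy cluster vertices, so $Z_k \le \tilde Z_k$ for every $k$ in the coupling, giving $S \le W$ almost surely and hence $S \preceq W$. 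Because $s \mapsto s$ and $s \mapsto s^2$ are nondecreasing, this yields $E(S) \le E(W)$ and $E(S^2) \le E(W^2)$.

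It then remains to evaluate the moments of $W$. The means are immediate: $E(\tilde Z_0) = 1$ and $E(\tilde Z_k) = Dp\,\nu^{k-1}$ for $k \ge 1$, so summing the geometric series gives $E(W) = 1 + Dp\,(1 - \nu^R)/(1 - \nu)$, which is the first bound. For the second moment I would write $E(W^2) = (E(W))^2 + \var(W)$ and compute the variance through the subtrees rooted at the children of $x$: letting $A_k$ be the total progeny up to generation $k$ of a branching process whose every individual has $\binomial(D - 1, p)$ offspring, the decomposition $A_k = 1 + \sum_i A_{k-1}^{(i)}$ over a $\binomial(D - 1, p)$ number of independent copies yields $a_k := E(A_k) = (1 - \nu^{k+1})/(1 - \nu)$ together with the random-sum variance recursion $v_k = \nu\,v_{k-1} + \nu(1 - p)\,a_{k-1}^2$, $v_0 = 0$. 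Solving this linear recursion and combining the root contribution through $\var(W) = Dp\,v_{R-1} + Dp(1-p)\,a_{R-1}^2$ produces, after simplification, exactly the stated second term.

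The conceptual heart is the coupling step, and the point requiring the most care is that percolation assigns one coin per edge shared by its two endpoints, whereas the branching process wants independent offspring at each node; examining each edge from a single side and compensating with auxiliary independent coins is what makes the domination $Z_k \le \tilde Z_k$ legitimate rather than merely heuristic. The remaining work is purely computational, namely solving the recursion for $v_k$ and checking that $Dp\,v_{R-1} + Dp(1-p)\,a_{R-1}^2$ collapses to $\frac{Dp(1 - p)}{(1 - \nu)^2}\big(\frac{(1 - \nu^R)(1 + \nu^{R+1})}{1 - \nu} - 2R\nu^R\big)$; this is a routine but somewhat lengthy algebraic reduction.
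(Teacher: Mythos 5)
Your proposal is correct and takes essentially the same route as the paper: the paper also couples the growing cluster (there formalized as a ``birth process'') with the truncated branching process whose root has $\binomial (D, p)$ offspring and every later individual has $\binomial (D - 1, p)$ offspring, and then computes both moments of the total progeny up to generation $R = N - 1$ by decomposing at the root's children. The only differences are cosmetic: the paper proves the comparison through a stochastic-domination lemma rather than your explicit padded-coin coupling (your version is in fact slightly more careful, since almost-sure domination of $S$ by $W$ immediately controls both moments), and it imports the moments of the truncated $\binomial (D - 1, p)$ progeny from a prior result instead of solving your variance recursion, which I verified does reduce to the stated expression.
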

 To prove the theorem, the idea is to think of the cluster~$\C_x$ as a dynamical object described by a birth process starting with one particle at~$x$ and in which particles give birth with probability~$p$ onto vacant adjacent vertices.
 The size of the cluster is equal to the ultimate number of particles in the birth process which, in turn, is dominated stochastically by the number of individuals up to generation~$\card (\V) - 1 = N - 1$ in a certain branching process. \vspace*{4pt}

% % % % % % % % % % % % % % % % % % % % % % % % % % % % % % % % % % % % % % % % % % % % % % % % % % % % % % % % % % % % % % % % % % % % % % % % % % % % % % % % % % % % % %

\noindent{\bf Birth process.}
 Having a vertex~$x \in \V$ and a realization of bond percolation with parameter~$p$ on the graph, we consider the following discrete-time birth process~$(\xi_n)$.
 The state at time~$n$ is a spatial configuration of particles on the vertices:
 $$ \xi_n \subset \V \quad \hbox{where} \quad \xi_n = \hbox{set of vertices occupied by a particle at time~$n$}. $$
 The process starts at generation~0 with one particle at~$x$, therefore~$\xi_0 = \{x \}$. Then,
\begin{itemize}
 \item for each vertex~$y$ adjacent to vertex~$x$, the particle at~$x$ gives birth to a particle sent to vertex~$y$ if and only if the edge~$(x, y)$ is open.
\end{itemize}
 These are the particles of generation~1.
 Assume the process has been defined up to generation~$n > 0$, and let~$Y_n = \card (\xi_n \setminus \xi_{n - 1})$ be the number of particles of generation~$n$.
 Label~$1, 2, \ldots, Y_n$ these particles and let~$x_{n, 1}, x_{n, 2}, \ldots, x_{n, Y_n}$ be their locations so that
 $$ \xi_n \setminus \xi_{n - 1} = \{x_{n, 1}, x_{n, 2}, \ldots, x_{n, Y_n} \}. $$
 Then, generation~$n + 1$ is defined sequentially from step~1 to step~$Y_n$ where, at step~$i$,
\begin{itemize}
 \item for each vertex~$y$ adjacent to vertex~$x_{n, i}$, the~$i$th particle of generation~$n$ gives birth to a particle sent to vertex~$y$ if and only if vertex~$y$ is empty and the edge~$(x_{n, i}, y)$ is open.
\end{itemize}
 Note that two particles~$i$ and~$j$ with~$i < j$ might share a common neighbor~$y$ in which case a child of particle~$i$ sent to~$y$ prevents particle~$j$ from giving birth onto~$y$.
 For a construction of the birth process from a realization of bond percolation on the dodecahedron, we refer to Figure~\ref{fig:birth}.
 The process is designed so that particles ultimately occupy the open cluster starting at~$x$.
 In particular, the total number of particles equals the cluster size, as proved in the next lemma.
\begin{figure}[t!]
\centering
\scalebox{0.36}{\input{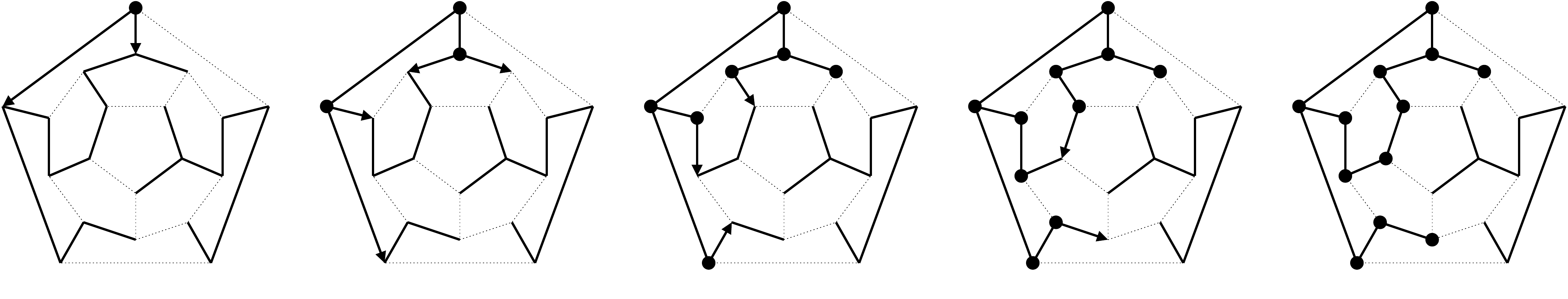_t}}
\caption{\upshape{Example of a construction of the birth process from a realization of bond percolation on the dodecahedron.
                  The thick lines represent the open edges, the black dots represent the vertices occupied by a particle at each generation, and the arrows represent the birth events, from parent to children.}}
\label{fig:birth}
\end{figure}
\begin{lemma} --
\label{lem:wet-particle}
 We have~$S = \card (\xi_{N - 1}) = Y_0 + Y_1 + \cdots + Y_{N - 1}$.
\end{lemma}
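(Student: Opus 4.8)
The plan is to establish the two equalities separately, the counting identity first and the set identity second. For the sum, I would begin by observing that the configurations form an increasing sequence $\xi_0 \subset \xi_1 \subset \cdots$: by construction a particle is only ever sent onto an empty vertex and no particle is ever removed, so a site once occupied stays occupied. Setting $\xi_{-1} = \emptyset$ so that $Y_0 = \card (\xi_0) = 1$, this monotonicity exhibits $\xi_{N - 1}$ as the disjoint union of the increments $\xi_n \setminus \xi_{n - 1}$ for $0 \leq n \leq N - 1$, whence $\card (\xi_{N - 1}) = \sum_{n = 0}^{N - 1} Y_n$ by additivity of cardinality. I would also record that the process has stabilized by generation~$N - 1$: generation~$n + 1$ is produced only from the particles of $\xi_n \setminus \xi_{n - 1}$, so once a generation is empty all subsequent ones are empty, meaning the nonempty generations are consecutive starting from~$0$; since each such generation beyond the~$0$th contributes at least one new vertex and there are only~$N$ vertices, there are at most~$N - 1$ of them. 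Consequently $\xi_{N - 1} = \xi_\infty := \bigcup_{n \geq 0} \xi_n$, which renders the cutoff~$N - 1$ harmless and reduces matters to identifying the ultimate occupied set~$\xi_\infty$.

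The remaining equality $S = \card (\xi_{N - 1})$ then amounts to the set identity $\xi_\infty = \C_x$, which I would prove by double inclusion. The inclusion $\xi_\infty \subseteq \C_x$ follows by a straightforward induction on the generation: the root~$x$ lies in~$\C_x$, and every child is born across an open edge from a parent already joined to~$x$ by a path of open edges, so the child inherits such a path and lies in~$\C_x$ as well.

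The reverse inclusion $\C_x \subseteq \xi_\infty$ is where the argument must be made carefully, and it is the step I expect to be the main obstacle, as it asserts that the exploration misses no reachable vertex. I would argue by contradiction: if some $y \in \C_x$ were never occupied, fix an open path $x = v_0, v_1, \ldots, v_k = y$ and let~$j$ be the first index with $v_j \in \xi_\infty$ but $v_{j + 1} \notin \xi_\infty$, which exists because $v_0 = x \in \xi_\infty$ while $v_k = y \notin \xi_\infty$. The site~$v_j$ is occupied at some generation, and when its particle is processed at the next generation it inspects the neighbor~$v_{j + 1}$ across the open edge $(v_j, v_{j + 1})$: if $v_{j + 1}$ is empty it receives a particle, and if not it was already occupied, so in either case $v_{j + 1} \in \xi_\infty$, contradicting the choice of~$j$. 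Hence no such~$y$ exists and $\C_x \subseteq \xi_\infty$. Combining the two inclusions gives $\C_x = \xi_\infty = \xi_{N - 1}$, and together with the counting identity this yields $S = \card (\C_x) = \card (\xi_{N - 1}) = Y_0 + Y_1 + \cdots + Y_{N - 1}$.
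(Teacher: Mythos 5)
Your proof is correct, and it reaches the two key facts by a genuinely different route than the paper. The paper's proof rests on a structural identification: it asserts that $\xi_n \setminus \xi_{n-1}$ is exactly the set of vertices of~$\C_x$ whose shortest open path to~$x$ has length~$n$, i.e., the generations are the breadth-first layers of the open cluster. From this, both conclusions fall out at once: every vertex of~$\C_x$ is ultimately occupied by exactly one particle (so $S = \sum_n Y_n$), and since a shortest self-avoiding path has at most $N - 1$ edges, $Y_n = 0$ for all $n \geq N$. You avoid this identification entirely: you prove $\xi_\infty = \C_x$ by double inclusion (induction along generations for one direction, a first-failure contradiction along an open path for the other), and you obtain the cutoff at $N - 1$ by pigeonhole (each nonempty generation after the zeroth adds at least one new vertex, and there are only $N$ vertices). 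The trade-off is this: the paper's layer identification is the sharper statement---it pins down exactly which vertices appear at which generation, and would give stabilization at the eccentricity of~$x$ in the open subgraph rather than at $N - 1$---but it is asserted without proof, and verifying it carefully would require an induction much like the one you carried out. Your argument is more elementary and fully self-contained; in particular, your contradiction step, which handles the case where the birth onto $v_{j + 1}$ is blocked because that vertex is already occupied by some other particle, supplies precisely the detail that the paper's one-line assertion glosses over.
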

\begin{proof}
 Because particles can only send a child to an empty vertex, each vertex is ultimately occupied by at most one particle.
 Also, the set occupied by a particle of generation~$n$ is
 $$ \xi_n \setminus \xi_{n - 1} = \{y \in \C_x : \hbox{the shortest open path connecting~$x$ and~$y$ has length~$n$} \}. $$
 In particular, all the vertices in the open cluster~$\C_x$ are ultimately occupied by exactly one particle whereas the vertices outside the cluster remain empty therefore
\begin{equation}
\label{eq:wet-particle-1}
  S = \card (\xi_0) + \card \bigg(\bigcup_{n = 1}^{\infty} \,(\xi_n \setminus \xi_{n - 1}) \bigg)
    = \card (\xi_0) + \sum_{n = 1}^{\infty} \,\card (\xi_n \setminus \xi_{n - 1}) = \sum_{n = 0}^{\infty} \,Y_n.
\end{equation}
 In addition, because the graph has~$N$ vertices, the shortest self-avoiding path on this graph must have at most~$N - 1$ edges, from which it follows that
\begin{equation}
\label{eq:wet-particle-2}
  \xi_n = \xi_{n - 1} \quad \hbox{and} \quad Y_n = \card (\xi_n \setminus \xi_{n - 1}) = 0 \quad \hbox{for all} \quad n \geq N.
\end{equation}
 Combining~\eqref{eq:wet-particle-1} and~\eqref{eq:wet-particle-2} gives the result.
\end{proof} \\

% % % % % % % % % % % % % % % % % % % % % % % % % % % % % % % % % % % % % % % % % % % % % % % % % % % % % % % % % % % % % % % % % % % % % % % % % % % % % % % % % % % % % %

\noindent{\bf Coupling with a branching process.}
 The next step is to compare the number of particles in the birth process with the number of individuals in the branching process~$(X_n)$ where
 $$ X_0 = 1 \quad \hbox{and} \quad X_{n + 1} = X_{n, 1} + X_{n, 2} + \cdots + X_{n, X_n} \quad \hbox{for all} \quad n \geq 0 $$
 with the random variables~$X_{n, i}$ representing the number of offspring of individual~$i$ at time~$n$ being independent with probability mass function
 $$ X_{0, 1} = \binomial (D, p) \quad \hbox{and} \quad X_{n, i} = \binomial (D - 1, p) \quad \hbox{for all} \quad n, i \geq 1. $$
 This branching process can be visualized as the number of particles in the birth process above modified so that births onto already occupied vertices are allowed.
 In particular, the branching process dominates stochastically the birth process.
\begin{lemma} --
\label{lem:branching-particle}
 For all~$n \geq 0$, we have the stochastic domination~$Y_n \preceq X_n$.
\end{lemma}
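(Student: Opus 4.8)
The plan is to prove the stronger, pointwise statement that the birth process and the branching process can be realized on a common probability space so that $Y_n \leq X_n$ for every $n$, which immediately gives the stochastic domination $Y_n \preceq X_n$. Concretely, I would build a generation-preserving injection from the particles of the birth process into the individuals of the branching process, i.e. a map sending each generation-$n$ particle to a distinct generation-$n$ individual. Since $Y_n$ counts the particles and $X_n$ the individuals of generation $n$, such an injection forces $Y_n \leq X_n$ almost surely. The injection is defined by induction on $n$: the root particle $x$ is matched with the root of the branching process, and the children of a particle $z$ are then mapped injectively into the offspring of the individual matched with $z$.

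The heart of the induction is a local offspring comparison driven by the sequential construction of the generations. A particle $z = x_{n, i}$ with $n \geq 1$ reaches its vertex through the open edge joining it to its parent, and among its remaining $D - 1$ edges it produces a child precisely along those that are open \emph{and} point to a currently empty vertex. Revealing edge states only as the birth process inspects them, I would use the dichotomy that an edge $(z, y)$ is examined only when one of its endpoints is processed, and a processed endpoint is necessarily occupied; hence if $y$ is empty when $z$ inspects it, the edge has not been examined before and its state is governed by a fresh independent $\bernoulli(p)$ coin. The number of children of $z$ is therefore a $\binomial(M, p)$ variable, where $M \leq D - 1$ is the number of empty non-parent neighbors of $z$, and padding with $D - 1 - M$ further independent coins manufactures a $\binomial(D - 1, p)$ offspring count that dominates and embeds the true children. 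The root is treated identically with all $D$ edges available, yielding exactly $\binomial(D, p)$ offspring and matching the law of $X_{0, 1}$.

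The delicate point, and the step I expect to be the main obstacle, is to verify that the offspring counts produced in this way are genuinely \emph{independent} with the prescribed binomial laws, so that the dominating object is honestly the branching process $(X_n)$ of the statement rather than some correlated surrogate. This amounts to checking that every percolation edge feeds the offspring count of at most one particle. If $z$ is processed before $z'$ and finds $z'$ empty, then either the edge $(z, z')$ is open, in which case $z'$ is born as a child of $z$ and $(z, z')$ becomes its parent edge, excluded from its non-parent edges, or the edge is closed, in which case $z'$, once processed, sees the occupied $z$ and does not list it among its empty neighbors; if instead $z'$ is already occupied when $z$ is processed, the edge is counted by neither. In every case the coin of $(z, z')$ is consumed by a single particle, so the real coins used across particles are distinct and, together with the independent padding coins, produce mutually independent offspring variables with the laws $\binomial(D, p)$ and $\binomial(D - 1, p)$. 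Supplying these to the inductive injection completes the coupling and establishes $Y_n \leq X_n$, hence $Y_n \preceq X_n$.
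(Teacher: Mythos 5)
Your proposal is correct and follows essentially the same route as the paper: both hinge on the observation that a non-root particle's parent edge leads to an occupied vertex, so its offspring are determined by at most $D-1$ edges that point to empty vertices and hence have never been examined before, making them fresh independent $\bernoulli(p)$ coins (with all $D$ edges available for the root). The only difference is one of packaging -- you make explicit the coupling (padding coins plus a generation-preserving injection of particles into individuals) that the paper leaves implicit when it asserts that the stochastic domination follows from the conditional offspring comparison.
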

\begin{proof}
 As for the branching process, for all~$n \geq 0$ and~$i \leq Y_n$, we let
 $$ Y_{n, i} = \hbox{\# offspring of the~$i$th particle of generation~$n$ in the birth process}. $$
 Because the edges are independently open with probability~$p$ and there are exactly~$D$ edges starting from each vertex, the number of offspring of the first particle is~$Y_1 = Y_{0, 1} = \binomial (D, p)$.
 For each subsequent particle, say the particle located at~$z$, we distinguish two types of edges starting from vertex~$z$ just before the particle gives birth.
\begin{itemize}
 \item There are~$m$ edges~$(z, y)$ that are connected to an occupied vertex~$y$.
       Because parent and offspring are located on adjacent vertices, we must have~$m \geq 1$. \vspace*{4pt}
 \item There are~$D - m$ edges~$(z, y)$ that are connected to an empty vertex~$y$.
       These edges have not been used yet in the construction of the birth process therefore each of these edges is open with probability~$p$ independently of the past of the process.
\end{itemize}
 From the previous two properties, we deduce that, for all~$n > 0$ and~$i \leq Y_n$,
\begin{equation}
\label{eq:branching-particle-2}
\begin{array}{rcl}
  P (Y_{n, i} \geq k) =
  E (P (Y_{n, i} \geq k \,| \,Y_{0, 1}, \ldots, Y_{n, i - 1})) \leq
  P (\binomial (D - 1, p) \geq k) = P (X_{n, i} \geq k). \end{array}
\end{equation}
 The stochastic domination follows from~$Y_1 = \binomial (D, p)$ and~\eqref{eq:branching-particle-2}.
\end{proof} \\

% % % % % % % % % % % % % % % % % % % % % % % % % % % % % % % % % % % % % % % % % % % % % % % % % % % % % % % % % % % % % % % % % % % % % % % % % % % % % % % % % % % % % %

\noindent{\bf Number of individuals.}
 It directly follows from Lemmas~\ref{lem:wet-particle} and~\ref{lem:branching-particle} that
\begin{equation}
\label{eq:wet-branching}
  E (S^k) = E ((Y_0 + Y_1 + \cdots + Y_{N - 1})^k) \leq E ((X_0 + X_1 + \cdots + X_{N - 1})^k)
\end{equation}
 for all~$k > 0$.
 In view of~\eqref{eq:wet-branching}, the last step to complete the proof is to show that the upper bounds in the theorem are in fact the first and second moments of the total number of individuals up to generation~$R = N - 1$ in the branching process:
 $$ E (\bar X_R) \quad \hbox{and} \quad E (\bar X_R^2) \quad \hbox{where} \quad \bar X_R = X_0 + X_1 + \cdots + X_R. $$
\begin{lemma} --
\label{lem:branching-first}
 Let~$\nu = (D - 1) p$. Then,
 $$ E (\bar X_R) = 1 + Dp \bigg(\frac{1 - \nu^R}{1 - \nu} \bigg) \quad \hbox{for all} \quad R > 0. $$
\end{lemma}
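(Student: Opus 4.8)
The plan is to compute the expected generation sizes $E(X_n)$ one generation at a time and then sum them, the only subtlety being that the root uses a $\binomial(D,p)$ offspring law while every later individual uses $\binomial(D-1,p)$, so the recursion picks up the factor $\nu$ only past the first step.

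First I would record the mean offspring counts. Since $X_1 = X_{0,1} \sim \binomial(D,p)$, we have $E(X_1) = Dp$. For $n \geq 1$, conditioning on $X_n$ and using that the offspring variables $X_{n,1}, \ldots, X_{n,X_n}$ are independent $\binomial(D-1,p)$ random variables with common mean $\nu = (D-1)p$, the tower property gives
$$ E(X_{n+1} \mid X_n) = \nu \, X_n \qquad \hbox{and hence} \qquad E(X_{n+1}) = \nu \, E(X_n). $$

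Next I would iterate this recursion from the base case $E(X_1) = Dp$, which yields $E(X_n) = Dp \, \nu^{n-1}$ for every $n \geq 1$, while $E(X_0) = 1$. Summing over $n$ and recognizing a finite geometric series,
$$ E(\bar X_R) = E(X_0) + \sum_{n=1}^{R} E(X_n) = 1 + \sum_{n=1}^{R} Dp \, \nu^{n-1} = 1 + Dp \bigg(\frac{1 - \nu^R}{1 - \nu} \bigg), $$
which is exactly the claimed identity.

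The computation is routine and I do not anticipate a genuine obstacle; the one point requiring care is the asymmetry between the root (with $\binomial(D,p)$ offspring) and all subsequent individuals (with $\binomial(D-1,p)$ offspring), which is precisely why the factor $Dp$ appears once out front while the geometric ratio is $\nu = (D-1)p$. I would also note that the displayed formula presumes $\nu \neq 1$, the degenerate case $\nu = 1$ being recovered by continuity, where the bracketed factor tends to $R$.
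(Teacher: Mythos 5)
Your proof is correct, but it follows a genuinely different route from the paper's. You compute the generation means directly: $E(X_1) = Dp$ from the root's $\binomial(D,p)$ law, then $E(X_{n+1}) = \nu\,E(X_n)$ for $n \geq 1$ by conditioning (a Wald-type identity, valid because the offspring counts of generation-$n$ individuals are independent of $X_n$), so $E(X_n) = Dp\,\nu^{n-1}$ and the lemma follows by summing a finite geometric series. The paper instead decomposes the total progeny over the subtrees rooted at the children of the first individual, writing $\bar X_R = 1 + \bar Z_1 + \cdots + \bar Z_{X_1}$ with the $\bar Z_i$ independent of $X_1$, obtaining $E(\bar X_R) = 1 + Dp\,E(\bar Z_i)$, and then quotes an external result (Theorem 2 of Jevti\'c--Lanchier 2020) for $E(\bar Z_i)$, the expected size up to generation $R-1$ of a homogeneous branching process with $\binomial(D-1,p)$ offspring. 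Your approach buys self-containedness: no outside theorem is invoked, and the asymmetry between the root and later individuals is handled transparently by starting the recursion at $E(X_1) = Dp$. The paper's subtree decomposition buys reusability: the identical decomposition is the engine of the second-moment computation in Lemma~\ref{lem:branching-second}, where a generation-by-generation attack like yours would have to track covariances between different generations and would be considerably messier. Your closing observation that the formula presumes $\nu \neq 1$, with the case $\nu = 1$ recovered by continuity (the bracket tending to $R$), is a correct and worthwhile remark that applies equally to the statement as written.
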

\begin{proof}
 For~$i = 1, 2, \ldots, X_1$, let
 $$ \begin{array}{rcl}
    \bar Z_i & \n = \n & \hbox{number of descendants of the~$i$th offspring of the first individual} \vspace*{2pt} \\ &&
                         \hbox{up to generation~$R$, including the offspring}. \end{array} $$
 Then~$\bar X_R = 1 + \bar Z_1 + \cdots + \bar Z_{X_1}$ and the~$\bar Z_i$ are independent of~$X_1$ therefore
 $$ E (\bar X_R) = E (E (\bar X_R \,| \,X_1)) = E (1 + X_1 E (\bar Z_i)) = 1 + E (X_1) E (\bar Z_i) = 1 + Dp E (\bar Z_i). $$
 Because~$\bar Z_i$ is the number of individuals up to generation~$R - 1$ in a branching process with offspring
 distribution~$\binomial (D - 1, p)$, we deduce from~\cite[Theorem~2]{jevtic_lanchier_2020} that
 $$ E (\bar X_R) = 1 + Dp \bigg(\frac{1 - (\mu p)^R}{1 - \mu p} \bigg)
                 = 1 + Dp \bigg(\frac{1 - \nu^R}{1 - \nu} \bigg) \quad \hbox{where} \quad \nu = \mu p = (D - 1) p. $$
 This completes the proof.
\end{proof}
\begin{lemma} --
\label{lem:branching-second}
 Let~$\nu = (D - 1) p$. Then, for all~$R > 0$,
 $$ E (\bar X_R^2) = \bigg(1 + Dp \bigg(\frac{1 - \nu^R}{1 - \nu} \bigg) \bigg)^2 +
                     \frac{Dp (1 - p)}{(1 - \nu)^2} \bigg(\frac{(1 - \nu^R)(1 + \nu^{R + 1})}{1 - \nu} - 2R \nu^R \bigg). $$
\end{lemma}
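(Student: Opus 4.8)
The plan is to exploit the same first-generation decomposition used in the proof of Lemma~\ref{lem:branching-first}, now at the level of second moments. Recall that $\bar X_R = 1 + \bar Z_1 + \cdots + \bar Z_{X_1}$, where $X_1 = \binomial (D, p)$ and, conditionally on~$X_1$, the variables~$\bar Z_i$ are independent copies of the total number of individuals up to generation~$R - 1$ in a branching process with offspring distribution~$\binomial (D - 1, p)$, independent of~$X_1$. Since the additive constant does not affect the variance, I would write $E (\bar X_R^2) = E (\bar X_R)^2 + \var (\bar X_R)$ and compute the variance of the random sum $\bar Z_1 + \cdots + \bar Z_{X_1}$ through the conditional variance formula,
$$ \var (\bar X_R) = E (X_1)\,\var (\bar Z_1) + \var (X_1)\,E (\bar Z_1)^2 = Dp\,\var (\bar Z_1) + Dp (1 - p)\,E (\bar Z_1)^2, $$
using $E (X_1) = Dp$ and $\var (X_1) = Dp (1 - p)$. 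The value $E (\bar Z_1) = (1 - \nu^R)/(1 - \nu)$ is already available from the proof of Lemma~\ref{lem:branching-first}, so the only genuinely new quantity is~$\var (\bar Z_1)$.

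To obtain~$\var (\bar Z_1)$, I would note that~$\bar Z_1$ is the total progeny up to generation~$R - 1$ of a homogeneous branching process with offspring law~$\binomial (D - 1, p)$, and hence satisfies its own one-step decomposition. Writing~$\sigma_k^2$ for the variance of the total progeny up to generation~$k$ and $m_k = (1 - \nu^{k + 1})/(1 - \nu)$ for its mean (Lemma~\ref{lem:branching-first} or~\cite[Theorem~2]{jevtic_lanchier_2020}), the same conditional variance formula yields the linear recursion
$$ \sigma_k^2 = \nu\,\sigma_{k - 1}^2 + (D - 1) p (1 - p)\,m_{k - 1}^2, \qquad \sigma_0^2 = 0, $$
because the offspring count has mean $\nu = (D - 1) p$ and variance $(D - 1) p (1 - p)$. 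Unrolling the recursion gives the closed form $\sigma_k^2 = (D - 1) p (1 - p) \sum_{j = 0}^{k - 1} \nu^{\,k - 1 - j} m_j^2$, and substituting $m_j^2 = (1 - \nu^{j + 1})^2/(1 - \nu)^2$ reduces everything to three elementary geometric sums in~$\nu$.

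Finally, I would set $k = R - 1$, insert $\var (\bar Z_1) = \sigma_{R - 1}^2$ and $E (\bar Z_1) = (1 - \nu^R)/(1 - \nu)$ into the expression for~$\var (\bar X_R)$, and add~$E (\bar X_R)^2$. Using $Dp\,(D - 1) p = Dp\,\nu$, the two contributions $Dp\,\var (\bar Z_1)$ and $Dp (1 - p)\,E (\bar Z_1)^2$ share the common prefactor $Dp (1 - p)/(1 - \nu)^2$ and combine into a single bracketed expression. The main obstacle is purely computational: carefully evaluating the three geometric sums and then verifying the polynomial identity in~$\nu$ that collapses the combined bracket to $(1 - \nu^R)(1 + \nu^{R + 1})/(1 - \nu) - 2R \nu^R$, where the bookkeeping with the exponents~$\nu^R$, $\nu^{R + 1}$ and the linear-in-$R$ term must be done with care. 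No probabilistic subtlety remains beyond this point.
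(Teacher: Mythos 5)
Your proof is correct, and its skeleton coincides with the paper's: the same first-generation decomposition $\bar X_R = 1 + \bar Z_1 + \cdots + \bar Z_{X_1}$, followed by the variance formula for random sums. Indeed, your identity
$$ \var (\bar X_R) = Dp \,\var (\bar Z_1) + Dp (1 - p) (E (\bar Z_1))^2 $$
is exactly what the paper obtains by conditioning on~$X_1$ and expanding the square: its intermediate expression $(1 + Dp E (\bar Z_i))^2 + Dp (1 - p)(E (\bar Z_i))^2 + Dp \var (\bar Z_i)$ is precisely your $(E (\bar X_R))^2 + \var (\bar X_R)$. The one genuine difference is how $\var (\bar Z_1)$ is produced. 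The paper simply quotes \cite[Theorem~2]{jevtic_lanchier_2020} (with $\mu = D - 1$ and $\sigma^2 = 0$), which hands it the closed form $\frac{\nu (1 - p)}{(1 - \nu)^2} \big(\frac{1 - \nu^{2R - 1}}{1 - \nu} - (2R - 1) \nu^{R - 1} \big)$, whereas you re-derive this quantity from scratch through the recursion $\sigma_k^2 = \nu \sigma_{k - 1}^2 + (D - 1) p (1 - p) m_{k - 1}^2$ with $\sigma_0^2 = 0$, unrolled into geometric sums. Your recursion is valid (it is again the random-sum variance formula, applied to the one-step decomposition $T_k = 1 + \sum_{i \leq \xi} T_{k - 1}^{(i)}$ of the total progeny), and evaluating the three sums at $k = R - 1$ gives $\sigma_{R - 1}^2 = \frac{\nu (1 - p)}{(1 - \nu)^2} \big(\frac{(1 - \nu^{R - 1})(1 + \nu^R)}{1 - \nu} - 2 (R - 1) \nu^{R - 1} \big)$, which agrees with the cited formula because $(1 - \nu^{R - 1})(1 + \nu^R)/(1 - \nu) = (1 - \nu^{2R - 1})/(1 - \nu) - \nu^{R - 1}$; from there the final collapse to the bracket $(1 - \nu^R)(1 + \nu^{R + 1})/(1 - \nu) - 2R \nu^R$ works exactly as you describe. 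What your route buys is self-containedness, since the lemma no longer rests on the external reference, at the price of a somewhat longer computation; the paper's route is shorter but delegates the heart of the calculation to the cited theorem.
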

\begin{proof}
 Using again~$\bar X_R = 1 + \bar Z_1 + \cdots + \bar Z_{X_1}$ and independence, we get
\begin{equation}
\label{eq:branching-second-1}
  \begin{array}{rcl}
    E (\bar X_R^2) & \n = \n & E (E ((1 + \bar Z_1 + \cdots + \bar Z_{X_1})^2 \,| \,X_1)) \vspace*{4pt} \\
                   & \n = \n & 1 + 2 E (X_1) E (\bar Z_i) + E (X_1) E (\bar Z_i^2) + E (X_1 (X_1 - 1))(E (Z_i))^2. \end{array}
\end{equation}
 In addition, using that~$X_1 = \binomial (D, p)$, we get
\begin{equation}
\label{eq:branching-second-2}
  \begin{array}{rcl}
    E (X_1 (X_1 - 1)) & \n = \n & \var (X_1) + (E (X_1))^2 - E (X_1) = Dp (1 - p) + D^2 p^2 - Dp = D (D - 1) p^2. \end{array}
\end{equation}
 Combining~\eqref{eq:branching-second-1} and~\eqref{eq:branching-second-2} and using some basic algebra give
 $$ \begin{array}{rcl}
      E (\bar X_R^2) & \n = \n & 1 + 2 Dp E (\bar Z_i) + Dp E (\bar Z_i^2) + D (D - 1) p^2 (E (\bar Z_i))^2 \vspace*{4pt} \\
                     & \n = \n & (1 +  Dp E (\bar Z_i))^2 + Dp (1 - p)(E (\bar Z_i))^2 + Dp \var (\bar Z_i). \end{array} $$
 Then, applying~\cite[Theorem~2]{jevtic_lanchier_2020} with~$\mu = D - 1$ and~$\sigma^2 = 0$, we get
\begin{equation}
\label{eq:branching-second-3}
  \begin{array}{l}
    E (\bar X_R^2) = \bigg(1 + \displaystyle Dp \bigg(\frac{1 - \nu^R}{1 - \nu} \bigg) \bigg)^2 + Dp (1 - p) \bigg(\frac{1 - \nu^R}{1 - \nu} \bigg)^2 \vspace*{8pt} \\ \hspace*{120pt}
                             + \ \displaystyle Dp \ \frac{\nu (1 - p)}{(1 - \nu)^2} \bigg(\frac{1 - \nu^{2R - 1}}{1 - \nu} - (2R - 1) \nu^{R - 1} \bigg) \end{array}
\end{equation}
 while a simple calculation implies that
\begin{equation}
\label{eq:branching-second-4}
  \begin{array}{l}
  \displaystyle Dp (1 - p) \bigg(\frac{1 - \nu^R}{1 - \nu} \bigg)^2 +
  \displaystyle Dp \ \frac{\nu (1 - p)}{(1 - \nu)^2} \bigg(\frac{1 - \nu^{2R - 1}}{1 - \nu} - (2R - 1) \nu^{R - 1} \bigg) \vspace*{8pt} \\ \hspace*{150pt} =
  \displaystyle \frac{Dp (1 - p)}{(1 - \nu)^2} \bigg(\frac{(1 - \nu^R)(1 + \nu^{R + 1})}{1 - \nu} - 2R \nu^R \bigg). \end{array}
\end{equation}
 Combining~\eqref{eq:branching-second-3} and~\eqref{eq:branching-second-4} gives the result.
\end{proof} \\ \\
 Theorem~\ref{th:branching} directly follows from~\eqref{eq:wet-branching}, and from Lemmas~\ref{lem:branching-first} and~\ref{lem:branching-second}.

%%%%%%%%%%%%%%%%%%%%%%%%%%%%%%%%%%%%%%%%%%%%%%%%%%%%%%%%%%%%%%%%%%%%%%%%%%%%%%%%%%%%%%%%%%%%%%%%%%%%%%%%%%%%%%%%%%%%%%%%%%%%%%%%%%%%%%%%%%%%%%%%%%%%%%%%%%%%%%%%%%%%%%%%%%%%

\section{Isolated vertices}
\label{sec:plarge}
 Theorem~\ref{th:branching} gives good upper bounds when the probability~$p$ is small.
 To complement this result, we now give a second set of upper bounds that are accurate when~$p$ is close to one.
\begin{theorem} --
\label{th:plarge}
 Let~$q = 1 - p$. Then,
 $$ E (S) \leq N - (N - 1) q^D \quad \hbox{and} \quad E (S^2) \leq N^2 - (N - 1)(2N - 1) q^D + (N - 1)(N - 2) q^{2D - 1}. $$
\end{theorem}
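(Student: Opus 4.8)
The plan is to exploit a single deterministic fact: any vertex $y \neq x$ all of whose $D$ incident edges are closed --- an \emph{isolated} vertex --- cannot lie in~$\C_x$, so such vertices provide a lower bound on the number of vertices disconnected from the source. Write $S = N - M$, where $M = \card (\V \setminus \C_x)$ is the number of vertices not connected to~$x$, set $I_y = \ind\{\hbox{all } D \hbox{ edges incident to } y \hbox{ are closed}\}$, and let $M' = \sum_{y \neq x} I_y$. Since an isolated $y \neq x$ satisfies $y \notin \C_x$, we have $0 \le M' \le M \le N - 1$. Because~$x$ is uniform and independent of the percolation configuration, $E (I_y \ind\{y \neq x\}) = q^D \cdot (N-1)/N$ for each~$y$, so summing over the~$N$ vertices gives $E (M') = (N - 1) q^D$. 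The first bound then follows at once from $E (S) = N - E (M) \le N - E (M') = N - (N - 1) q^D$.

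For the second moment I would first reduce to~$M'$ by monotonicity. Writing $S^2 = (N - M)^2 = N^2 - g(M)$ with $g(m) = m(2N - m)$, note that $g'(m) = 2N - 2m > 0$ on $[0, N - 1]$, so~$g$ is increasing there; since $M' \le M$ and both lie in $[0, N - 1]$, this gives $S^2 \le N^2 - g(M') = N^2 - 2N M' + (M')^2$ pointwise, hence $E (S^2) \le N^2 - 2N E (M') + E ((M')^2)$. It remains to compute $E ((M')^2)$. Expanding $(M')^2 = \sum_{y \neq x} I_y + \sum_{y, z \neq x,\, y \neq z} I_y I_z$ (using $I_y^2 = I_y$) and averaging over the independent uniform~$x$ produces the weight $(N-1)/N$ on the diagonal and $(N-2)/N$ on the off-diagonal. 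The key probabilistic input is $E (I_y I_z)$ for $y \neq z$: two distinct vertices are jointly isolated with probability $q^{2D - 1}$ if they are adjacent (they share one edge, leaving $2D - 1$ distinct edges) and $q^{2D}$ otherwise.

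Counting the $N D = 2 \card (\E)$ ordered adjacent pairs and the remaining $N(N - 1 - D)$ ordered non-adjacent pairs, and collecting the $q^D$ terms into $-(N-1)(2N-1) q^D$, I expect to reach
\[
  E (S^2) \le N^2 - (N - 1)(2N - 1) q^D + (N - 2) D\, q^{2D - 1} + (N - 2)(N - 1 - D)\, q^{2D}.
\]
The final, and really the only delicate, step is to absorb the last two terms into the single term $(N - 1)(N - 2) q^{2D - 1}$ of the statement. Since $q \le 1$ gives $q^{2D} \le q^{2D - 1}$, it suffices that $D + (N - 1 - D) q \le N - 1$, which holds because $q \le 1$ together with $N - 1 - D \ge 0$, the latter being exactly the constraint that a connected $D$-regular graph has at least $D + 1$ vertices. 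The main obstacle is therefore the bookkeeping in $E ((M')^2)$: correctly splitting adjacent from non-adjacent pairs, tracking the $x$-dependent weights, and recognizing that $N \ge D + 1$ is precisely what makes the closing relaxation $q^{2D} \le q^{2D - 1}$ go through.
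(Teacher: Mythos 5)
Your proof is correct and rests on exactly the same idea as the paper's own argument: a vertex $y \neq x$ all of whose $D$ incident edges are closed cannot belong to $\C_x$, a given vertex is isolated with probability $q^D$, and two distinct vertices are jointly isolated with probability at most $q^{2D-1}$ because they share at most one edge. The only difference is organizational: the paper expands $E(S^2)$ over ordered pairs and bounds each term by inclusion--exclusion on the isolation events, getting $P(x \leftrightarrow y, \, x \leftrightarrow z) \leq 1 - 2q^D + q^{2D-1}$ uniformly, whereas you track the isolated-vertex count $M'$, compute $E((M')^2)$ exactly by splitting adjacent from non-adjacent pairs, and then relax $q^{2D} \leq q^{2D-1}$ at the end using $N \geq D + 1$ --- a bit more bookkeeping that lands on the identical bound.
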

\begin{proof}
 Let~$\eta (y) = \ind \{y \in \C_x \}$ for all~$y \in \V$.
 Then, for all integers~$k \geq 1$,
\begin{equation}
\label{eq:plarge-1}
  E (S^k) = E \bigg(\sum_{y \in \V} \,\eta (y) \bigg)^k = \sum_{y_1, \ldots, y_k \in \V} E (\eta (y_1) \ \cdots \ \eta (y_k)) = \sum_{y_1, \ldots, y_k \in \V} P (x \leftrightarrow y_1, \ldots, x \leftrightarrow y_k)
\end{equation}
 where~$\leftrightarrow$ means that there is an open path.
 To estimate the last sum, we let~$B_y$ be the event that all the edges incident to~$y$ are closed.
 Using that there are exactly~$D$ edges incident to each vertex, and that there is at most one edge connecting any two different vertices, say~$y \neq z$, we get
\begin{equation}
\label{eq:plarge-2}
  P (B_y) = q^D \quad \hbox{and} \quad P (B_y \cup B_z) = P (B_y) + P (B_z) - P (B_y \cap B_z) \geq 2q^D - q^{2D - 1}.
\end{equation}
 In addition, $B_y \subset \{x \not \leftrightarrow y \}$ for all~$y \neq x$.
 This and~\eqref{eq:plarge-2} imply that
\begin{equation}
\label{eq:plarge-4}
  P (x \not \leftrightarrow y \ \hbox{or} \ x \not \leftrightarrow z) \geq \left\{\begin{array}{lcl}
    q^D & \hbox{when} & \card \{x, y, z \} = 2 \vspace*{4pt} \\
   2q^D - q^{2D - 1} & \hbox{when} & \card \{x, y, z \} = 3. \end{array} \right.
\end{equation}
 Using~\eqref{eq:plarge-1} with~$k = 1$ and~\eqref{eq:plarge-4}, we deduce that
 $$ E (S) = 1 + \sum_{y \neq x} \,(1 - P (x \not \leftrightarrow y)) \leq 1 + \sum_{y \neq x} \,(1 - q^D) = 1 + (N - 1)(1 - q^D) = N - (N - 1) q^D. $$
 Similarly, applying~\eqref{eq:plarge-1} with~$k = 2$, observing that
 $$ \begin{array}{rcl}
    \card \{(y, z) \in \V^2 : \card \{x, y, z \} = 2 \} & \n = \n & 3 (N - 1) \vspace*{4pt} \\
    \card \{(y, z) \in \V^2 : \card \{x, y, z \} = 3 \} & \n = \n & (N - 1)(N - 2), \end{array} $$
 and using~\eqref{eq:plarge-2} and~\eqref{eq:plarge-4}, we deduce that
 $$ \begin{array}{rcl}
      E (S^2) & \n \leq & \n 1 + 3 (N - 1)(1 - q^D) + (N - 1)(N - 2)(1 - 2 q^D + q^{2D - 1}) \vspace*{4pt} \\
              & \n = \n & N^2 - (N - 1)(2N - 1) q^D + (N - 1)(N - 2) q^{2D - 1}. \end{array} $$
 This completes the proof of Theorem~\ref{th:plarge}.
\end{proof}

%%%%%%%%%%%%%%%%%%%%%%%%%%%%%%%%%%%%%%%%%%%%%%%%%%%%%%%%%%%%%%%%%%%%%%%%%%%%%%%%%%%%%%%%%%%%%%%%%%%%%%%%%%%%%%%%%%%%%%%%%%%%%%%%%%%%%%%%%%%%%%%%%%%%%%%%%%%%%%%%%%%%%%%%%%%%

\begin{figure}[t!]
\centering
\scalebox{0.18}{\input{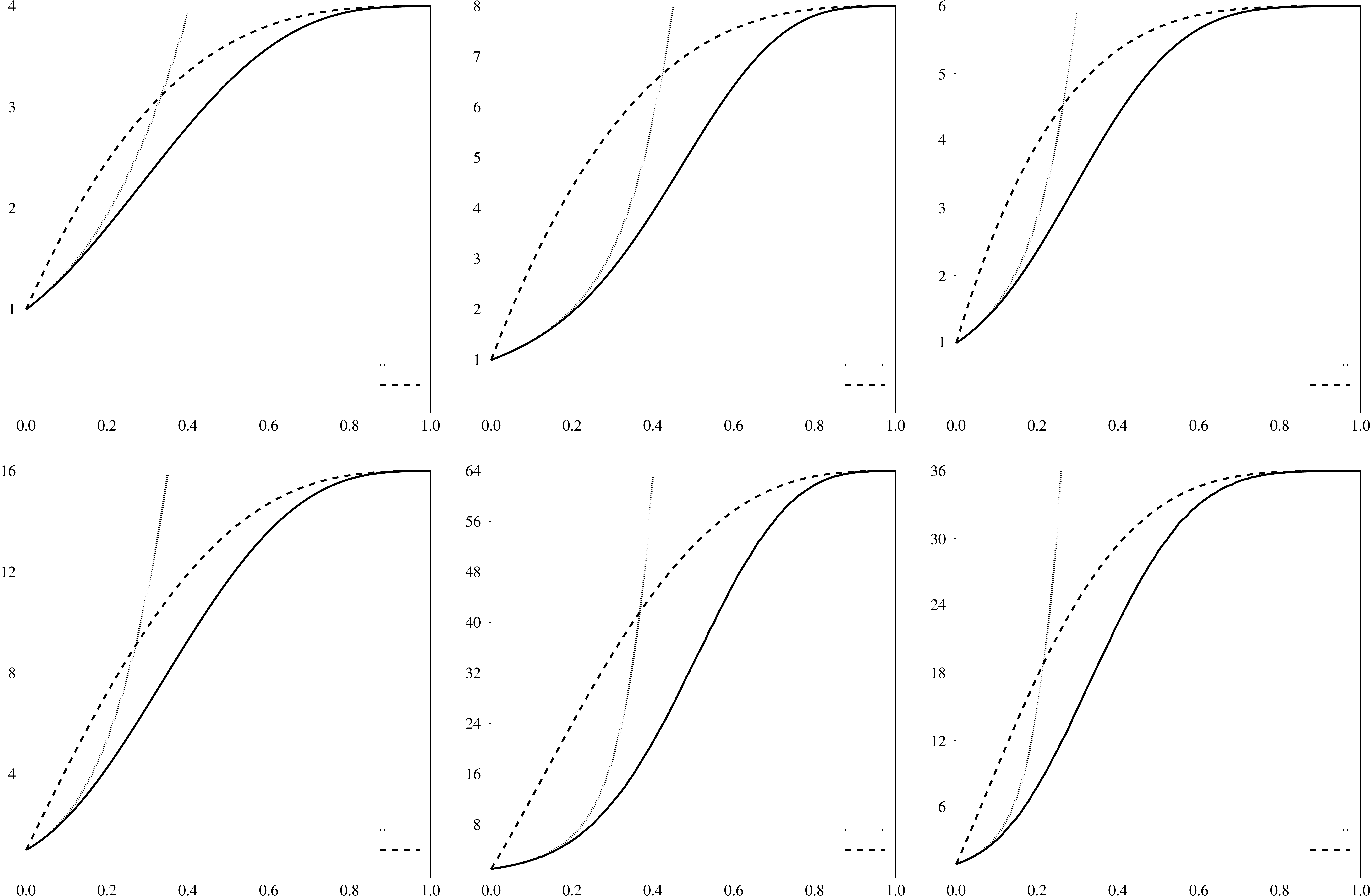_t}}
\caption{\upshape{First and second moments of the bond percolation cluster size on the tetrahedron, the cube, and the octahedron as functions of the probability~$p$.
                  The solid lines show the moments obtained from the average of one hundred thousand independent realizations of the process.
                  The dotted lines represent the upper bound from Theorem~\ref{th:branching} and the dashed lines the upper bound from Theorem~\ref{th:plarge} for the appropriate values of~$D$ and~$N$.}}
\label{fig:solids-123}
\end{figure}

\section{Numerical examples}
\label{sec:examples}
 As an illustration, we apply our results to the first three Platonic solids~(the tetrahedron, the cube, and the octahedron) which can be viewed as worst case scenarios as they contain many cycles.
 Figure~\ref{fig:solids-123} shows the first and second moments of the cluster size obtained from numerical simulations along with the upper bounds in the theorems obtained by setting the degree~$D$ and the number of vertices~$N$ appropriately for each of the three solids.

%%%%%%%%%%%%%%%%%%%%%%%%%%%%%%%%%%%%%%%%%%%%%%%%%%%%%%%%%%%%%%%%%%%%%%%%%%%%%%%%%%%%%%%%%%%%%%%%%%%%%%%%%%%%%%%%%%%%%%%%%%%%%%%%%%%%%%%%%%%%%%%%%%%%%%%%%%%%%%%%%%%%%%%%%

\bibliographystyle{plain}
\bibliography{biblio.bib}

\end{document}